\newtheorem{theorem}{Theorem}
\def\qed{\begin{flushright} $\Box$ \end{flushright}}
\def\Dbar{\leavevmode\lower.6ex\hbox to 0pt{\hskip-.23ex \accent"16\hss}D}
\def\bZ{{\mbox{\bf Z}}}
\def\paf{{\mbox{\rm PAF}}}
\begin{document}

{\bf\LARGE
\begin{center}
Construction of symmetric Hadamard matrices
\end{center}
}

\noindent
{\bf N. A. Balonin$^a$}, Dr. Sc., Tech., Professor, korbendfs@mail.ru \\
{\bf Y. N. Balonin$^a$}, tomaball@mail.ru \\
{\bf D. {\v{Z}}. {\Dbar}okovi{\'c}$^b$}, PhD, Distinguished Professor Emeritus, djokovic@uwaterloo.ca \\
{\bf D. A. Karbovskiy$^a$}, alra@inbox.ru \\
{\bf M. B. Sergeev$^a$}, Dr. Sc., Tech., Professor, 
mbsn@mail.ru \\
\noindent
${}^{a}$Saint-Petersburg State University of Aerospace Instrumentation, 
67, B. Morskaia St., 190000, Saint-Petersburg, Russian Federation \\
${}^{b}$University of Waterloo, Department of Pure Mathematics and Institute for Quantum Computing, Waterloo, Ontario, N2L 3G1, Canada \\

\begin{abstract}
{\bf Purpose:} To investigate more fully, than what was done in the past, the construction of symmetric Hadamard matrices of 
``propus type'', a symmetric variation of the Goethals-Seidel array characterized by necessary symmetry of one of the blocks and equality of two other blocks out of the total of four blocks.
{\bf Methods:} Analytic theory of equations for parameters of difference families used in the propus construction of symmetric Hadamard matrices, based on the theorems of Liouville and Dixon. Numerical method, due to the authors, for the search of two or three cyclic blocks to construct Hadamard matrices of two-circulant or propus type. This method speeds up the classical search of required sequences by distributing them into different bins using a hash-function. {\bf Results:} A wide collection of new symmetric Hadamard matrices was obtained and tabulated, according to the feasible sets of parameters. In addition to the novelty of this collection, we have obtained new symmetric Hadamard matrices of orders 92, 116 and 156. For the order 156, no symmetric Hadamard matrices were known previously. {\bf Practical relevance:} Hadamard matrices are used extensively in the problems of error-free coding, compression and masking of 
video information. Programs for search of symmetric Hadamard matrices and a library of constructed matrices are used in the 
mathematical network ``Internet'' together with executable 
on-line algorithms.

{\bf Keywords:} Symmetric Hadamard matrices, Goethals-Seidel array, propus construction, cyclic difference families.
\end{abstract}

\section{Introduction}

In this paper we investigate some special features of symmetric 
Hadamard matrices. Let us recall that a Hadamard matrix is a $\{1,-1\}$-matrix {\bf H} of order $n$ whose columns (or rows) 
are mutually orthogonal
\begin{equation} \label{HM}
{\bf H}^T{\bf H}={\bf H}{\bf H}^T=n{\bf I},
\end{equation}
where {\bf I} is the identity matrix. This definition is due 
to Hadamard \cite{Hadam}, who pointed out the extremal property 
of the solutions of this quadratic equation (these matrices have 
the maximal possible absolute value of determinant among all 
complex matrices whose entries have modulus at most 1), and also 
the possibility that such matrices exist for all orders 
$n=4v$, $v$ integer.

As a rule, the search for Hadamard matrices is simplified by using special arrays, built from circulant blocks, i.e., matrices 
generated by cyclic shifts of the top row. As an example, we 
can mention the Williamson array \cite{JW} which makes use of four circulant matrices {\bf A}, {\bf B}, {\bf C}, {\bf D} and their negatives as blocks inside the globally non-symmetric array. The requirement that the blocks be symmetric works in some cases but not always. The first failure of symmetry occurs for size $v=35$ \cite{Djokovic:JCMCC:1993}. More such examples were found later, see the paper \cite{HKT}.

This problem was circumvented by Goethals and Seidel 
\cite{GS-array:1970} who invented a new array, now known as Goethals-Seidel array or just GS-array, see (\ref{GS-array}) below. This array does not require any of the four circulant blocks to be symmetric. That is its major advantage. If at least 
one of the blocks is of skew type, then one can rearrange 
the blocks to obtain a skew-Hadamard matrix.
Ever since this array has played a very important role in the 
construction of Hadamard matrices and skew-Hadamard matrices.

However, a tool of similar nature for the construction of  symmetric Hadamard matrices was lacking. Such a tool was invented recently by J. Seberry and N. A. Balonin \cite{SB}. They introduced a simple variation of the GS-array to which we 
refer as the {\em Propus array}, see (\ref{Propus-array}) below.
In the paper \cite{SB} it is shown that the symmetry of the array can be easily achieved by demanding that the block {\bf A} be symmetric and that among the remaining three blocks two 
of them are equal, say ${\bf B}={\bf C}$ (an analog of partial symmetry).  This tool has been already used to construct new orders of symmetric Hadamard matrices \cite{SB,DDK:SpecMatC:2015}.

Since the size of a Hadamard matrix or a skew or symmetric 
Hadamard matrix can always be doubled, while preserving the type 
of the matrix, it suffices to construct these matrices for 
orders $4v$ with $v$ odd. 
We show (see Theorem \ref{thm:par-odd}) that for every odd 
integer $v$ there exists at least one propus parameter set. 
Taking this into account, the Propus array can be used, conjecturally, to obtain symmetric Hadamard matrices of order $4v$ for all odd $v$. However there exists a propus parameter set for which there is no cyclic propus family. So far we have 
only one such example namely $(25;10,10,10,10;15)$.

Our purpose is to develop effective numerical algorithms for the search of symmetric Hadamard matrices and subsequent analysis of them and to obtain new orders of such matrices. 
All matrix solutions are classified by using the table of all feasible parameter sets in the range of odd $v<50$. We point out some peculiarities arising from this table. For instance, apart from the Turyn infinite series in which all four circulant blocks are symmetric, there is only one case known so far 
(namely $v=13$) where there exist a propus family with both 
blocks ${\bf A}$ and ${\bf D}$ symmetric, and satisfying 
${\bf B}={\bf C}$ as well.

This paper continues the investigation of the theme of symmetry, considered in the papers \cite{BD:2015-a,BD:2015-b}, and in particular we present for the first time symmetric Hadamard matrices of order 156. In this way, the orders 92, 116, 156, 172 listed as exceptions in \cite[Table 1.52, p. 277]{CK:2007} are all covered by the propus construction.  The next unsolved case is the order 188 which is the object of our further research.

\section{Preliminaries}

Let $G$ be a finite abelian group of order $v$ written additively. A sequence $(X_1,X_2,\ldots,X_m)$ of subsets 
of $G$ is a {\em difference family} if there exists a 
nonnegative integer $\lambda$ such that for any nonzero 
element $a\in G$ there are exactly $\lambda$ triples 
$(x,y,i)\in X_i\times X_i\times\{1,2,\ldots,m\}$ such that 
$x-y=a$. In that case we say that this difference family 
has {\em parameters} $(v;k_1,k_2,\ldots,k_m;\lambda)$, 
where $k_i=|X_i|$ is the cardinality of $X_i$ and that 
the $X_i$ are its {\em base blocks}. A simple counting argument shows that the parameter set of a difference family must satisfy the equality
\begin{equation} \label{eq:uslov-a}
\sum_i k_i(k_i-1)=\lambda(v-1).
\end{equation}
If $G$ is a cyclic group, we say that the difference families of $G$ are {\em cyclic}.

Although the concepts defined below can be defined over arbitrary finite abelian groups, we shall assume in this paper 
that $G$ is a cyclic group of order $v$ and we identify it with the additive group of the ring of integers 
$\bZ_v=\bZ/v\bZ=\{0,1,\ldots,v-1\}$ modulo $v$. We are interested in the difference families consisting of four base blocks having the parameter set $(v;k_1,k_2,k_3,k_4;\lambda)$ such that 
\begin{equation} \label{eq:uslov-b}
\lambda=\sum_i k_i -v.
\end{equation}
For convenience, we shall refer to these parameter sets as 
{\em GS-parameter sets} and to the difference families having these parameters as {\em GS-difference families}. It is a folklore conjecture that for each GS-parameter set there exists 
a cyclic difference family with these parameters.

There is a close relationship between GS-difference families and the quadruples of $\{\pm 1\}$-sequences (also known as {\em binary sequences}) of length $v$ whose periodic autocorellation functions add up to 0 (except at the origin). Let us recall some relevant definitions.

Let $A=(a_0,a_1,\ldots,a_{v-1})$ be an integer sequence of length $v$. We view the indices $0,1,\ldots,v-1$ as elements of $\bZ_v$. 
The {\em periodic autocorrelation function} of $A$ is the function $\paf_A:\bZ_v\to\bZ$ defined by 
\begin{equation} \label{paf-def}
\paf_A(s)=\sum_{i=0}^{v-1} a_i a_{i+s}.
\end{equation}
(The indices should be reduced modulo $v$.)
To $A$ we associate the cyclic matrix $C$ whose first row is $A$ 
itself. We say that $A$ is {\em symmetric} resp. {\em skew} if $a_i=a_{v-i}$ resp. $a_i=-a_{v-i}$ for $i=1,2,\ldots,v-1$. 
Equivalently, $A$ is symmetric if and only if $C$ is a symmetric matrix, and $A$ is skew if and only if $C+C^T=2a_0 I_v$ where 
$T$ denotes the transpose and $I_v$ the identity matrix of order $v$.

To any subset $X\subset\bZ_v$ we associate the binary sequence 
$A=(a_0,a_1,\ldots,a_{v-1})$ where $a_i=-1$ if and only if
$i\in X$. Let $(X_1,X_2,X_3,X_4)$ be a quadruple of subsets of 
$\bZ_v$ with $|X_i|=k_i$ and let $(A_1,A_2,A_3,A_4)$ be their associated binary sequences, respectively. Then it is well known that the $X_i$ form a difference family whose parameter set satisfies the equation (\ref{eq:uslov-b}) if and only if the periodic autocorrelation functions of the $A_i$ add up to 0 (except at the origin).

Let $(A_1,A_2,A_3,A_4)$ be a quadruple of binary sequences of length $v$ whose $\paf$-functions add up to 0, and let 
$(C_1,C_2,C_3,C_4)$ be their associated cyclic matrices. Then by plugging these  matrices into the Goethals-Seidel array:
\begin{equation} \label{GS-array}
H:=\left[ \begin{array}{cccc}
C_1 & C_2R & C_3R & C_4R \\
-C_2R & C_1 & -RC_4 & RC_3 \\
-C_3R & RC_4 & C_1 & -RC_2 \\
-C_4R & -RC_3 & RC_2 & C_1
\end{array} \right],
\end{equation}
we obtain a Hadamard matrix of order $4v$. The matrix
$R$ in (\ref{GS-array}) is the back-circulant identity matrix of order $v$,
$$
\label{Matrix-R}
R=\left[ \begin{array}{ccccc}
0 & 0 & \cdots & 0 & 1 \\
0 & 0 &        & 1 & 0 \\
\vdots &       &   &   \\
0 & 1 &        & 0 & 0 \\
1 & 0 &        & 0 & 0
\end{array} \right].
$$

This is a very powerful method of construction of Hadamard matrices. If $A_1$ is skew then $H$ will be a Hadamard matrix of {\em skew type} ({\em skew-Hadamard matrix}), i.e., a Hadamard matrix such that ${\bf H}+{\bf H}^T=2{\bf I}_{4v}$. 

It was recently observed in \cite{SB} that (after a small twist) one can make use of the GS-array to construct also the symmetric Hadamard matrices. Namely, if we multiply the first column of (\ref{GS-array}) by $-1$ and switch the second and third rows then we obtain the new array, to which we refer as the 
{\em Propus array}, 
\begin{equation} \label{Propus-array}
\left[ \begin{array}{cccc}
-C_1 & C_2R & C_3R & C_4R \\
C_3R & RC_4 & C_1 & -RC_2 \\
C_2R & C_1 & -RC_4 & RC_3 \\
C_4R & -RC_3 & RC_2 & C_1
\end{array} \right]
\end{equation}
This is still a Hadamard matrix. In the special case when 
$A_1$ is symmetric and $A_2=A_3$ this matrix is a symmetric
Hadamard matrix. We say that a GS-parameter set 
$(v;k_1,k_2,k_3,k_4;\lambda)$ is a {\em propus parameter set} 
if $k_2=k_3$, and we say that a difference family 
$(X_1,X_2,X_3,X_4)$ having such parameter set is a {\em propus family} if $X_2=X_3$ and the set $X_1$ or $X_4$ is symmetric.

To summarize, in order to construct a symmetric Hadamard matrix 
of order $4v$ it suffices to construct a propus difference family 
$(X_1,X_2,X_3,X_4)$ in $\bZ_v$. All symmetric Hadamard matrices constructed in this paper use this method. We conjecture that for $v$ odd this method is universal, i.e., for each odd $v>1$ there exists a propus difference family in $\bZ_v$. Theorem \ref{thm:par-odd} below made this conjecture possible.

\section{Existence of parameter sets}

In this section we prove the following theorem.

\begin{theorem} \label{thm:par-odd}
For any odd positive integer $v>1$ there exists a propus parameter set $(v;x,y,y,z;\lambda)$ with $x,y,z<v/2$.
\end{theorem}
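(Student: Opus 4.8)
The plan is to convert the two defining relations of a propus parameter set into a single ternary Diophantine equation and then solve that equation with the Gauss--Legendre three-squares theorem.

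First I would record the constraints on $(v;x,y,y,z;\lambda)$. By (\ref{eq:uslov-b}), $\lambda = x + 2y + z - v$, and by (\ref{eq:uslov-a}), $x(x-1) + 2y(y-1) + z(z-1) = \lambda(v-1)$. Setting $a = v - 2x$, $b = v - 2y$, $d = v - 2z$ and eliminating $\lambda$ between these two relations, a short computation collapses them into
\begin{equation*}
a^2 + 2b^2 + d^2 = 4v.
\end{equation*}
Conversely, from any solution I recover $x = (v-a)/2$, $y = (v-b)/2$, $z = (v-d)/2$ (integers as soon as $a,b,d$ are odd, since $v$ is odd) together with $\lambda = x + 2y + z - v$. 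Here $x,y,z < v/2$ is exactly $a,b,d > 0$, and $x,y,z \ge 0$ is $a,b,d \le v$, which holds because $a^2,b^2,d^2 \le 4v$ forces $a,b,d \le 2\sqrt{v} \le v$ for $v \ge 4$ (the case $v=3$ is checked directly); moreover Cauchy--Schwarz gives $a + 2b + d \le 4\sqrt{v} \le 2v$, so $\lambda = v - (a+2b+d)/2 \ge 0$. Thus it suffices to find positive odd integers $a,b,d$ with $a^2 + 2b^2 + d^2 = 4v$.

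The oddness is the point. Reducing $a^2 + 2b^2 + d^2 = 4v$ modulo powers of $2$ (using that $v$ is odd) shows that $a$ and $d$ must share a parity and that a solution is either entirely even or entirely odd; the even solutions only repackage representations of $v$, so I must manufacture an odd one. For odd $a,d$ the numbers $s = (a+d)/2$ and $t = (a-d)/2$ are integers with $a^2 + d^2 = 2(s^2 + t^2)$, and the equation becomes
\begin{equation*}
s^2 + t^2 + b^2 = 2v.
\end{equation*}
I have therefore reduced the whole problem to writing $2v$ as a sum of three squares, subject only to the parity demands that $b$ be odd and that $s,t$ have opposite parity.

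Finally I would apply the three-squares theorem: an integer fails to be a sum of three squares exactly when it equals $4^{k}(8m+7)$, whereas $2v \equiv 2 \pmod 4$ for odd $v$, so $2v$ is never excluded and admits a representation $2v = \alpha^2 + \beta^2 + \gamma^2$. Since $2v \equiv 2 \pmod 4$ and squares are $0$ or $1 \pmod 4$, exactly two of $\alpha,\beta,\gamma$ are odd and one is even. I then label the even value as $s$ and the two odd values as $t$ and $b$; this makes $b$ odd and $s,t$ of opposite parity, so $a = s+t$ and $d = s-t$ are odd and satisfy $a^2 + 2b^2 + d^2 = 2(s^2+t^2+b^2) = 4v$. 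Taking absolute values (legitimate since odd numbers are nonzero) makes $a,b,d$ positive, and reading off $x,y,z$ finishes the argument. The only genuine obstacle is this parity control: the form $x^2 + 2y^2 + z^2$ is regular, so $4v$ is certainly represented, but a bare existence statement need not supply the all-odd representation that a propus parameter set requires; rerouting through $s^2 + t^2 + b^2 = 2v$ is what simultaneously secures a representation and forces the correct parities.
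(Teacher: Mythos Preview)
Your argument is correct. Both your proof and the paper's hinge on the same quadratic identity $(v-2x)^2+2(v-2y)^2+(v-2z)^2=4v$, but you resolve it by a different classical theorem. The paper substitutes $v-2x=2p+1$, etc., reducing to $T_p+2T_q+T_r=(v-1)/2$ and then invokes Liouville's 1863 result that the triangular form with coefficients $(1,2,1)$ is universal; nonnegativity of $p,q,r$ automatically gives $x,y,z<v/2$. You instead pass through the substitution $a=s+t$, $d=s-t$ to reach $s^2+t^2+b^2=2v$, apply the Gauss--Legendre three-squares theorem, and use the congruence $2v\equiv 2\pmod 4$ to force the parity pattern (exactly one even square) needed to reassemble odd $a,b,d$. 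In effect you are reproving the relevant instance of Liouville's theorem from three squares, which is indeed how such universality statements are typically established. The payoff of your route is that it rests on a more widely known theorem and makes the parity bookkeeping explicit; the paper's route is shorter because it outsources that bookkeeping to Liouville. Your verification of the side conditions $0\le x,y,z<v/2$ and $\lambda\ge 0$ via the bounds $a,b,d\le 2\sqrt{v}$ and Cauchy--Schwarz is also a bit more careful than the paper's.
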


Let us first recall an old result of Liouville.
If $x$ is an indeterminate, the polynomial $T_x=x(x+1)/2$ takes
nonnegative integer values at integer points. These values are 
known as {\em triangular numbers}. The {\em ternary triangular form} is a polynomial $aT_x+bT_y+cT_z$, where the coefficients$(a,b,c)$ are positive integers and $x,y,z$ are commuting indeterminates. 
Such form is said to be {\em universal} if it represents all positive integers, i.e., each positive integer is the value of this form at some point $(x_0,y_0,z_0)\in\bZ^3$. Since 
$T_{-x}=T_{x-1}$, we can assume that $x_0,y_0,z_0$ are 
nonnegative. Liouville has proved in 1863 \cite{JL} that there are exactly seven universal ternary triangular forms, assuming 
that $a\le b\le c$. These forms have the coefficients 
$$(1,1,1),~(1,1,2),~(1,1,4),~(1,1,5),~(1,2,2),~(1,2,3),~(1,2,4).$$
This theorem of Liouville generalizes a result of Gauss who proved earlier the universality in the case $a=b=c=1$. We shall use below the fact that the triangular form with coefficients $a=b=1$, $c=2$ is universal.

\begin{proof}
The block sizes $x,y,z$ of our parameter set satisfy the equation 
\begin{equation} \label{eq:par-a}
x(x-1)+2y(y-1)+z(z-1)=\lambda(v-1).
\end{equation}
As $\lambda=x+2y+z-v$ this equation can be written as
\begin{equation} \label{eq:par-b}
(v-2x)^2+2(v-2y)^2+(v-2z)^2=4v.
\end{equation}
Since $v$ is odd, we have $v-2x=2p+1$, $v-2y=2q+1$, $v-2z=2r+1$, 
where $p,q,r$ are integers. Then the above equation becomes \begin{equation} \label{eq:trougao}
T_p+2T_q+T_r=(v-1)/2. 
\end{equation}
By Liouville's result mentioned above, there exist integers $p,q,r$ satisfying this equation. 
Hence, there exist integers $x,y,z$ satisfying the equation 
(\ref{eq:par-b}). If $x<0$ then $v-2x>v$ and the equation  
(\ref{eq:par-b}) implies that $4>v-2x$. This contradicts our 
hypothesis that $v\ge3$. We conclude that $x\ge0$. Similarly, 
we can show that $y,z\ge0$. The equation (\ref{eq:par-a}) implies now that $x+2y+z-v>0$. Hence the theorem is proved.
\qed
\end{proof}

In Appendix A we list the propus parameter sets 
$(v;x,y,y,z;\lambda)$ for odd $v$, $1<v<50$. They are computed by solving the equation (\ref{eq:trougao}) for each of these values $v$. Since we can replace any base block by its complement and permute the blocks, we shall assume that $x,y,z\le(v-1)/2$ 
and $x\ge z$.

For the sake of completeness, let us consider the case when $v$ is even. The result here is quite different, there is an arithmetic condition which rules out the existence of propus parameter sets for some even values of $v$.

\begin{theorem} \label{thm:par-even}
For even positive integer $v$ there exists a propus parameter set $(v;x,y,y,z;\lambda)$ with $x,y,z\le v/2$ if and only if $v$ does not have the form $2^{2k+1}(8m+7)$, where $k$ and $m$ are nonnegative integers.
\end{theorem}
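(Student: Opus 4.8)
The plan is to reduce the even case to a sum-of-three-squares problem, exactly as was done for the odd case, and then invoke the Gauss--Legendre three-square theorem instead of Liouville's result. Starting from equation (\ref{eq:par-b}), namely $(v-2x)^2+2(v-2y)^2+(v-2z)^2=4v$, I would introduce $u=v-2x$, $w=v-2z$, and $t=v-2y$, so that the existence of a propus parameter set with $x,y,z\le v/2$ is equivalent to writing $4v=u^2+2t^2+w^2$ with $u,w,t$ of the correct parity (all having the same parity as $v$, hence all even since $v$ is even) and in the correct range. Writing $v=2v'$ and dividing through, the governing Diophantine condition becomes a representation of a multiple of $v$ by the ternary form $u^2+2t^2+w^2$; the arithmetic obstruction $2^{2k+1}(8m+7)$ should emerge precisely as the set of integers \emph{not} representable by this form.

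First I would set up the parity bookkeeping carefully. Since $v$ is even, each of $v-2x$, $v-2y$, $v-2z$ is even, so I write $v-2x=2p$, $v-2y=2q$, $v-2z=2r$ with $p,q,r\in\bZ$. Substituting into (\ref{eq:par-b}) and dividing by $4$ turns the equation into
\begin{equation} \label{eq:even-form}
p^2+2q^2+r^2=v.
\end{equation}
Thus a propus parameter set with the stated size bounds exists if and only if $v$ is represented by the ternary quadratic form $p^2+2q^2+r^2$, with the range and sign constraints on $x,y,z$ to be checked afterward exactly as in the odd case. The genus-theory or classical result I would cite is that the form $x^2+y^2+2z^2$ represents precisely those positive integers \emph{not} of the shape $4^k(16m+14)$; equivalently $2^{2k+1}(8m+7)$. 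So the heart of the matter is the statement: $v=p^2+2q^2+r^2$ is solvable if and only if $v\neq 2^{2k+1}(8m+7)$.

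The main obstacle, and the step I would treat most carefully, is the representation theorem for $p^2+2q^2+r^2$ itself. One clean route is to relate it to the Gauss--Legendre three-square theorem: I would show that $p^2+2q^2+r^2$ represents $v$ iff the genuine sum of three squares represents a suitable associate of $v$, using the substitution $2q^2+r^2=(q+r)^2+(q-r)^2-\,\cdots$ carefully, or by appealing directly to the known list of integers represented by this particular diagonal ternary form (it is a classical single-class form). I would then verify that the exceptional set of $p^2+2q^2+r^2$ is exactly $\{2^{2k+1}(8m+7):k,m\ge 0\}$, handling the powers of $2$ by the standard descent $v\equiv 0\pmod 4\Rightarrow$ all of $p,q,r$ even, which reduces $v$ to $v/4$ and accounts for the factor $4^k=2^{2k}$.

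Finally, once (\ref{eq:even-form}) is solvable, I would recover $x=(v-2p)/2$, $y=(v-2q)/2$, $z=(v-2r)/2$ and argue the range and nonnegativity bounds $x,y,z\le v/2$ just as in the proof of Theorem~\ref{thm:par-odd}: replacing $p,q,r$ by their absolute values keeps (\ref{eq:even-form}) valid and forces $0\le x,y,z\le v/2$, while $\lambda=x+2y+z-v$ is nonnegative because (\ref{eq:par-a}) gives $\lambda(v-1)=\sum k_i(k_i-1)\ge 0$. The only genuinely new ingredient beyond the odd-$v$ argument is the arithmetic characterization of the exceptional set, so I expect the bulk of the write-up to be the citation and verification of the representation theorem for $p^2+2q^2+r^2$, with the parameter-recovery being routine.
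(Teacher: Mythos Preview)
Your proposal is correct and follows essentially the same route as the paper: reduce via $v-2x=2p$, $v-2y=2q$, $v-2z=2r$ to the equation $p^2+2q^2+r^2=v$, then invoke the known characterization of integers represented by this ternary form. The paper shortcuts your planned derivation of the exceptional set by citing Dixon's theorem \cite[Theorem V]{LD} directly, which states outright that $p^2+2q^2+r^2=v$ fails exactly when $v=2^{2k+1}(8m+7)$; you can simply cite this instead of routing through Gauss--Legendre or a genus argument.
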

\begin{proof}
The equation (\ref{eq:par-b}) is valid also in the case, i.e.,  when $v$ is even. Then we have $v-2x=2p$, $v-2y=2q$, $v-2z=2r$, where $p,q,r$ are integers. Hence, the equation (\ref{eq:par-b}) can be written as $p^2+2q^2+r^2=v$. By a theorem of Dixon \cite[Theorem V]{LD}, this equation has no integral solution if and only if $v$ has the form $2^{2k+1}(8m+7)$.
One can now easily complete the proof.
\qed
\end{proof}

For instance, this theorem rules out the integers 
$v=14,30,46,56,62,78,94$, i.e., there are no propus parameter sets with these values of the parameter $v$.

\section
{Symmetric Hadamard matrices of order $4\cdot39$}

The smallest order $4v$ for which no symmetric Hadamard
matrix was known prior to this work is $156=4\cdot39$.
There are two parameter sets $(39;17,17,17,15;27)$ and 
$(39;18,16,16,16;27)$ that can be used to construct such matrices. We have constructed many such matrices, but here we record only five pairwise non-equivalent propus families 
for each parameter set.

For the first parameter set, the block ${\bf A}$ is symmetric in the first four families while ${\bf D}$ is symmetric in the last family.

\begin{eqnarray*}
&& (39;17,17,17,15;27) \\
&& [0,2,4,7,8,12,13,18,19,20,21,26,27,31,32,35,37], \\
&& [0,1,2,3,10,14,17,18,19,21,24,26,27,30,32,36,37],\\
&& [0,1,2,3,10,14,17,18,19,21,24,26,27,30,32,36,37],\\
&& [0,1,2,3,4,5,9,11,12,15,26,29,31,33,36]; \\
&& \\
&& [0,2,6,8,9,12,13,18,19,20,21,26,27,30,31,33,37], \\
&& [0,1,2,3,5,6,8,10,14,15,17,18,19,24,28,34,37], \\
&& [0,1,2,3,5,6,8,10,14,15,17,18,19,24,28,34,37], \\
&& [0,1,2,3,5,10,13,16,17,18,22,24,25,28,33]; \\
&& \\
&& [0,3,7,8,9,12,13,17,19,20,22,26,27,30,31,32,36], \\
&& [0,1,2,5,7,8,12,16,20,22,23,25,32,33,34,36,38], \\
&& [0,1,2,5,7,8,12,16,20,22,23,25,32,33,34,36,38], \\
&& [0,1,2,3,6,9,18,20,22,23,30,32,33,34,36]; \\
&& \\
&& [0,3,7,9,13,14,17,18,19,20,21,22,25,26,30,32,36], \\
&& [0,1,2,3,4,7,8,9,10,15,18,20,24,28,29,31,33], \\
&& [0,1,2,3,4,7,8,9,10,15,18,20,24,28,29,31,33], \\
&& [0,1,3,4,8,13,14,16,17,20,23,25,28,35,37];\\
&& \\
&& [0,1,2,3,5,10,12,14,16,17,23,24,28,30,31,36,37],\\
&& [0,1,2,4,9,10,12,13,17,18,22,24,27,30,32,33,37],\\
&& [0,1,2,4,9,10,12,13,17,18,22,24,27,30,32,33,37],\\
&& [0,1,2,3,4,8,14,18,21,25,31,35,36,37,38]. \\
\end{eqnarray*}

For the second parameter set, the block ${\bf A}$ is symmetric in the first family while ${\bf D}$ is symmetric in the other four families. 

\begin{eqnarray*}
&& (39;18,16,16,16;27) \\
&& [3,4,5,7,8,10,12,17,18,21,22,27,29,31,32,34,35,36],\\
&& [0,1,2,3,8,9,17,19,21,23,26,29,32,35,36,37],\\
&& [0,1,2,3,8,9,17,19,21,23,26,29,32,35,36,37],\\
&& [0,1,2,4,5,6,10,11,13,14,21,22,27,29,33,36];\\
&& \\
&& [0,1,2,6,7,9,10,12,15,17,20,21,24,28,29,31,33,37],\\
&& [0,1,2,3,7,13,18,20,21,24,27,28,32,34,36,37],\\
&& [0,1,2,3,7,13,18,20,21,24,27,28,32,34,36,37],\\
&& [2,7,8,9,15,17,18,19,20,21,22,24,30,31,32,37]; \\
&& \\
&& [0,1,2,6,8,9,10,12,14,16,18,19,27,30,32,33,36,37],\\
&& [0,1,2,3,7,8,11,19,21,24,26,27,28,31,33,36],\\
&& [0,1,2,3,7,8,11,19,21,24,26,27,28,31,33,36],\\
&& [2,4,5,6,7,11,17,18,21,22,28,32,33,34,35,37]; \\
&& \\
&& [0,1,2,8,9,15,16,18,19,21,23,26,28,30,31,32,34,35],\\
&& [0,1,2,5,6,14,17,20,22,24,25,27,28,29,31,35],\\
&& [0,1,2,5,6,14,17,20,22,24,25,27,28,29,31,35],\\
&& [1,7,10,11,12,13,17,19,20,22,26,27,28,29,32,38]; \\
&& \\
&& [0,1,2,3,4,6,8,9,10,12,18,23,24,28,30,31,32,35],\\
&& [0,1,2,5,6,8,11,13,15,17,24,27,29,30,36,37],\\
&& [0,1,2,5,6,8,11,13,15,17,24,27,29,30,36,37],\\
&& [4,5,8,9,10,13,15,16,23,24,26,29,30,31,34,35]. \\
\end{eqnarray*}

\section{Description of the algorithm}
\label{Algoritam}

Let us first describe the algorithm for the search of periodic 
Golay pairs, a somewhat simpler problem. The search we have in mind is a non-exhaustive search which uses a random number generator to create the sequences. 

The periodic Golay pairs of length $v$ are pairs of $\{\pm1\}$-sequences 
${\bf a}=(a_0,a_1,\ldots,a_{v-1})$ and 
${\bf b}=(b_0,b_1,\ldots,b_{v-1})$ whose $\paf$ functions have sum $0$ except at the origin. (We shall ignore the value of the $\paf$ functions at the origin.) These pairs exist only for even values of $v$ (excluding the trivial case $v=1$). The number of indices $i$ such that $a_i=-1$ is fixed, and we denote it by 
$k_1$. Similarly, $k_2$ is the number of $-1$ terms in ${\bf b}$. 
Since the $\paf$ values of a sequence are symmetric, i.e., 
$\paf_{\bf a}(s)=\paf_{\bf a}(v-s)$ for $s=1,2,\ldots,v-1$, it suffices to compute and record these values for $1\le s\le v/2$.

The very simple and time consuming algorithm can be described as follows. First it generates just one random a-sequence having exactly $k_1$ terms $-1$ and computes its $\paf$ function. 
Next, it computes a bunch of (say w) random b-sequences having exactly $k_2$ terms $-1$. At the same time it computes their $\paf$ values and checks whether the sum of the $\paf$ functions of the a-sequence and the b-sequence is $0$. (The required memory for this is negligible.) This complets  one basic step. This step is then repeated as long as desired.

A more effective algorithm generates a collection of, say, $w$ binary sequences ${\bf a}$ having exactly $k_1$ terms $-1$ 
and records them together with the $\paf$ values in a table. Another table also of size $w$ is used to generate and record a collection of binary sequences ${\bf b}$ having exactly $k_2$ terms $-1$ and their negated $\paf$ values.
The two tables of size $w$ make it possible to make quickly 
$w^2$ comparisons. 

The second method performs faster because it computes only $2w$ sequences (and their $\paf$ values) in order to check $w^2$ pairs for matching, while $w$ steps of the simple method has to compute $w(w+1)$ sequences to check $w^2$ pairs for matching.
So, the saving is in the number of sequences that one has to
generate and compute the paf-values: $w(w+1)$ for the brute force method and $2w$ for the second method.

However, making two big tables is not feasible as the active 
memory is limited. To handle this problem, one of the authors 
proposed and implemented the following solution. The tables 
of data are replaced by trees having a fixed number of branches. 
Each branch can hold at most $w$ records of data to which we 
refer as {\em leaves} of that branch. A random number generator is used to generate data and a hash-function, $f$, is used 
to distribute the data and store them into the branches. 
After generating a sequence say ${\bf a}$ and computing its 
$\paf$ values, the hash function is evaluated at the 
$\paf$ values which gives the numerical label $f(\paf_{\bf a})$ 
of the branch where the data will be stored. In the case of the ${\bf b}$ sequence, the $\paf$ values are negated just before storing them into the chosen branch.

If $f$ takes different values at the functions $\paf_{\bf a}$ and $-\paf_{\bf b}$ then $\paf_{\bf a}\ne -\paf_{\bf b}$, but the converse fails. Consequently, no comparisons need to be made between the ${\bf a}$-leaves and ${\bf b}$-leaves belonging to branches having different labels. For that reason, this third 
method is much more effective than the second one.

A big crown of size $M=2^m$ gives the function
$$
f=\sum_{i=1}^m {\rm sign}(\paf(i))\cdot 2^{i-1}
$$
whose coefficients are the signatures 
${\rm sign}(\paf(i))\in\{0,1\}$ of the first $m$ $\paf$ 
values. (We take that $1$ corresponds to positive values 
of the $\paf$ function.)  This definition can be modified 
by using the ternary function 
${\rm sign}(\paf(i))\in\{0,1,-1\}$ which distinguishes 
$0$ and the signs of the nonzero $\paf$ values, and 
adding $2^m$ if necessary to make the label positive.

Abstract ``ideal'' hash-function gives strictly uniform distribution of leaves over the branches.

When using binary representation of integers in computers memory it is beneficial to use logic operations of the iteration 
formula
$$
F=((F~{\rm shl}~1)~{\rm or}~(F~{\rm shr}~31)~{\rm and}~1),
~i=1,\ldots,v/2.
$$
The symbols ``shl'' and ``shr'' denote left and right shift of 
the binary code for the indicated amount, the computations 
begin with the value $F=0$ and terminate with the restriction 
$f=F~{\rm mod}~ M$, which gurantees that the size of the crown will be $M=2^m$.

\section{Parameter sets for symmetric Hadamard matrices}
\label{Parametri}

We list here the propus parameter sets $(v;x,y,y,z;\lambda)$ with $v$ odd in the range $v<50$ such that $x,y,z<v/2$ and 
$x\ge z$. The cyclic propus families consisting of four base blocks $A,B,C,D\subseteq\bZ_v$ having sizes $x,y,y,z$, respectively, and such that $B=C$ and either $A$ or $D$ is symmetric give symmetric Hadamard matrices of order $4v$. If $x=z\ne y$ then we also include in our list the parameter set $(v;y,x,x,y;\lambda)$ indicating that the two blocks of size $x$ are required to be equal. In that case we treat these two parameter sets as different propus parameter sets.

The four base blocks are denoted by $A,B,C,D$. In all propus families mentioned below we require that $B=C$. If we know that there is such a family with symmetric block $A$, we indicate this by the symbol $A$, and similarly for the block $D$. If we know that there exists a family with both $A$ and $D$ symmetric, then we write the symbol $AD$. If there are no families with $A$ or $D$ symmetric, we write ``No'' after the parameter set. Finally, the question mark means that the existence of families with $A$ or $D$ symmetric remains undecided.

The symbol $T$ indicates that the parameter set belongs to the Turyn series of Williamson matrices. We note that $T$ implies 
$AD$. Further, the symbol $X$ indicates that the parameter set belongs to another infinite series (see 
\cite[Theorem 5]{DDK:SpecMatC:2015}) which is based on the 
paper \cite{XXSW} of Xia, Xia, Seberry, and Wu. In our list 
below $X$ implies $D$. More precisely, for a difference family
$A,B,C,D$ in the $X$-series two blocks are equal, say $B=C$, 
and one of the remaining blocks is skew, block $A$ in our list, 
and the last one is symmetric, block $D$. We remark that a difference family in the $X$-series gives a skew and a symmetric Hadamard matrix of order $4v$.

There are only two propus parameter sets, $(5;1,2,2,1;1)$ and $(25;10,10,10,10;15)$, for which there are no cyclic propus difference families. While for the fomer set this claim can be easily proved, for the latter set it was checked by performing an exhaustive search. There is a possibility that a propus family with parameters $(25;10,10,10,10;15)$ may exist in 
$\bZ_5\times\bZ_5$.

\newpage
\begin{center}
Table of propus paramater sets with odd $v<50$
\end{center}

$$
\begin{array}{llll}
(3;1,1,1,0;0) & AD,T,X  & (5;1,2,2,1;1) & {\rm No} \\ 
(5;2,1,1,2;1) & AD,T,X  & (7;3,2,2,2;2) & AD,T  \\ 
(7;3,3,3,1;3) & D,X  & (9;3,3,3,3;3) & A,D  \\ 
(9;3,4,4,2;4) & AD,T  & (11;5,4,4,3;5) & A,D,X  \\ 
(13;4,6,6,4;7) & A,D  & (13;5,5,5,4;6) & AD,T  \\ 
(13;6,4,4,6;7) & AD  & (13;6,6,6,3;8) & A,D  \\ 
(15;6,7,7,4;9) & A,D  & (15;7,5,5,6;8) & AD,T  \\ 
(17;6,7,7,6;9) & A,D  & (17;7,6,6,7;9) & A,D  \\ 
(17;8,7,7,5;10) & A,D,X  & (19;7,9,9,6;12) & AD,T  \\ 
(19;8,8,8,6;11) & A,D  & (19;9,7,7,7;11) & A,D  \\ 
(21;9,8,8,8;12) & AD,T  & (21;10,10,10,6;15) & A,D,X  \\ 
(23;9,10,10,8;14) & A,D  & (23;10,11,11,7;16) & A,D  \\ 
(25;9,12,12,9;17) & A,D  & (25;10,10,10,10;15) & {\rm No}  \\ 
(25;12,9,9,12;17) & AD,T  & (25;12,10,10,9;16) & A,D  \\ 
(25;12,11,11,8;17) & A,D  & (27;11,13,13,9;19) & A,D  \\ 
(27;12,11,11,10;17) & A,D  & (27;12,12,12,9;18) & A,D  \\ 
(27;13,10,10,12;18) & AD,T,X  & (29;11,13,13,11;19) & A,D  \\ 
(29;13,11,11,13;19) & A,D  & (31;13,13,13,12;20) & AD,T  \\ 
(31;13,14,14,11;21) & A,D  & (31;15,12,12,13;21) & A,D  \\ 
(31;15,15,15,10;24) & A,D  & (33;13,16,16,12;24) & A,D  \\ 
(33;15,13,13,14;22) & A,D  & (33;15,16,16,11;25) & A,D  \\ 
(33;16,14,14,12;23) & A,D,X  & (35;16,15,15,13;24) & A,D  \\ 
(35;17,16,16,12;26) & A,D,X  & (37;15,16,16,15;25) & A,D  \\ 
(37;15,17,17,14;26) & AD,T  & (37;16,15,15,16;25) & A,D  \\ 
(37;16,18,18,13;28) & A,D  & (37;17,17,17,13;27) & A,D  \\ 
(37;18,15,15,15;26) & A,D  & (39;17,17,17,15;27) & A,D  \\ 
(39;18,16,16,16;27) & A,D  & (41;16,20,20,16;31) & A,D  \\ 
(41;18,19,19,15;30) & A,D  & (41;20,16,16,20;31) & AD,T,X  \\ 
(43;18,21,21,16;33) & ?  & (43;19,18,18,18;30) & ?  \\ 
(43;21,17,17,20;32) & ?  & (43;21,19,19,16;32) & ?  \\ 
(43;21,21,21,15;35) & D  & (45;18,21,21,18;33) & ?  \\ 
(45;19,20,20,18;32) & AD,T  & (45;21,18,18,21;33) & ? \\ 
(45;21,20,20,17;33) & ?  & (45;21,22,22,16;36) & ?  \\ 
(45;22,19,19,18;33) & D,X  & (47;20,22,22,18;35) & ?  \\ 
(47;22,20,20,19;34) & ?  & (47;23,19,19,21;35) & ?  \\ 
(47;23,22,22,17;37) & ?  & (49;21,21,21,21;35) & ?  \\ 
(49;22,22,22,19;36) & ?  & (49;22,24,24,18;39) & ?  \\ 
(49;23,20,20,22;36) & AD,T  & (49;23,23,23,18;38) & ?  \\ 
\end{array}
$$

\section{Acknowledgements}
The third author wishes to acknowledge generous support by NSERC.
His work was made possible by the facilities of the Shared Hierarchical Academic Research Computing Network (SHARCNET) and Compute/Calcul Canada.

\section{Appendix A}

In order to justify our claims made in section \ref{Parametri} regarding the propus parameter sets, we give the examples of the propus families having the required properties. In all cases the blocks $B$ and $C$ are equal, and to save space we omit the block $C$. The families are terminated by semicolons.

\begin{eqnarray*}
&& (9;3,3,3,3;3) \\
&& [0,1,8],~ [0,2,5],~ [0,1,4]; \\
&& \\
&& (11;5,4,4,3;5) \\
&& [0,2,5,6,9],~ [0,1,2,8],~ [0,2,8]; \\
&& \\
&& (13;4,6,6,4;7) \\
&& [3,5,8,10],~ [0,1,2,3,6,10],~ [0,1,5,7]; \\
&& \\
&& (13;6,4,4,6;7) \\
&& [2,5,6,7,8,11],~ [0,1,4,6],~ [1,3,4,9,10,12]; \\
&& \\
&& (13;6,6,6,3;8) \\
&& [1,4,5,8,9,12],~ [0,1,2,4,6,7],~ [0,2,5]; \\
&& [0,1,3,4,6,9], [0,1,2,8,9,11],~ [0,4,9]; \\
&& \\
&& (15;6,7,7,4;9) \\
&& [1,6,7,8,9,14],~ [0,1,2,4,5,7,11],~ [0,3,6,10]; \\
&& [0,2,4,5,10,12],~ [0,1,2,4,9,10,13],~ [5,6,9,10]; \\
&& \\
&& (17;6,7,7,6;9) \\
&& [2,5,6,11,12,15],~ [0,1,2,3,5,8,13],~ [0,1,7,9,11,15]; \\
&& [0,6,7,8,9,10,11],~ [0,1,5,7,10,13],~ [0,1,2,6,8,11,15]; \\
&& \\
&& (17;8,7,7,5;10) \\
&& [2,3,5,6,11,12,14,15],~ [0,1,3,4,11,13,15],~ [0,1,5,6,12]; \\
\end{eqnarray*}

\begin{eqnarray*}
&& (19;8,8,8,6;11) \\
&& [1,2,3,9,10,16,17,18], \\
&& [0,1,3,9,12,13,15,17], \\
&& [0,1,6,7,10,15]; \\
&& [0,1,2,4,6,9,12,13], \\
&& [0,1,2,5,6,12,15,17], \\
&& [4,6,7,12,13,15]; \\
&& \\
&& (19;9,7,7,7;11) \\
&& [0,1,2,3,7,12,16,17,18], \\
&& [0,1,3,7,11,12,14], \\
&& [0,1,3,6,9,13,15]; \\
&& [0,1,2,7,12,15,16,17,18], \\
&& [0,1,4,6,11,13,14], \\
&& [0,2,6,9,10,13,17]; \\
&& \\
&& (21;10,10,10,6;15) \\
&& [1,2,3,5,10,11,16,18,19,20], \\
&& [0,1,3,4,6,8,11,12,13,18], \\
&& [0,1,2,6,12,19]; \\
&& \\
&& (23;9,10,10,8;14) \\
&& [0,2,3,6,10,13,17,20,21], \\
&& [0,2,4,5,6,7,12,13,18,21], \\
&& [2,3,6,11,12,14,15,16]; \\
&& [0,1,4,9,14,17,19,21,22], \\
&& [0,5,9,11,12,13,14,16,20,22], \\
&& [2,5,6,11,12,17,18,21]; \\
&& \\
&& (23;10,11,11,7;16) \\
&& [1,3,4,9,10,13,14,19,20,22], \\
&& [1,3,4,6,7,8,9,15,18,19,22], \\
&& [1,3,4,5,10,18,20]; \\
\end{eqnarray*}

\begin{eqnarray*}
&& [1,2,5,11,12,15,16,18,19,20], \\
&& [1,3,4,5,6,7,13,16,18,20,21], \\
&& [0,5,7,11,12,16,18]; \\
&& \\
&& (25;9,12,12,9;17) \\
&& [0,1,5,8,10,15,17,20,24], \\
&& [0,1,3,9,12,13,14,16,17,19,20,24], \\
&& [1,7,13,14,15,17,18,20,24]; \\
&& \\
&& (25;12,10,10,9;16) \\
&& [1,2,3,4,10,12,13,15,21,22,23,24], \\
&& [0,5,10,14,15,17,18,21,23,24], \\
&& [2,4,8,12,14,16,19,20,24]; \\
&& [0,1,7,12,14,15,17,18,20,21,22,23], \\
&& [2,3,4,6,11,12,13,16,18,24], \\
&& [0,2,6,9,10,15,16,19,23]; \\
&& \\
&& (25;12,11,11,8;17) \\
&& [3,4,5,6,9,11,14,16,19,20,21,22], \\
&& [0,1,9,10,13,14,17,19,20,21,23], \\
&& [2,9,13,15,17,20,22,23]; \\
&& [0,4,5,8,11,12,13,14,15,16,17,22], \\
&& [3,4,7,8,10,13,15,19,21,22,23], \\
&& [2,3,5,7,18,20,22,23]; \\
&& \\
&& (27;11,13,13,9;19) \\
&& [0,1,2,4,8,12,15,19,23,25,26], \\
&& [3,4,5,6,8,9,11,15,16,18,20,24,25], \\
&& [0,8,9,10,13,16,18,19,22]; \\
&& [2,3,4,8,9,12,13,15,17,19,22], \\
&& [2,3,6,9,11,17,18,19,20,21,23,25,26], \\
&& [0,1,3,8,13,14,19,24,26]; \\
&& \\
\end{eqnarray*}

\begin{eqnarray*}
&& (27;12,11,11,10;17) \\
&& [1,4,5,6,8,9,18,19,21,22,23,26], \\
&& [5,6,9,11,14,16,17,18,20,25,26], \\
&& [1,2,4,5,6,10,12,18,22,26]; \\
&& [0,1,2,5,6,8,11,15,16,17,20,22], \\
&& [1,2,7,9,15,17,18,19,21,22,26], \\
&& [1,2,3,6,12,15,21,24,25,26]; \\
&& \\
&& (27;12,12,12,9;18) \\
&& [1,7,9,10,11,13,14,16,17,18,20,26], \\
&& [0,2,3,4,7,8,9,14,18,19,21,22], \\
&& [0,2,4,6,9,12,15,20,26]; \\
&& [2,3,6,7,8,9,12,14,15,20,24,26], \\
&& [1,3,4,8,11,12,19,20,22,24,25,26], \\
&& [0,2,5,12,13,14,15,22,25]; \\
&& \\
&& (29;11,13,13,11;19) \\
&& [0,2,5,7,13,14,15,16,22,24,27], \\
&& [3,6,7,8,9,10,13,14,17,18,20,22,26], \\
&& [0,1,3,4,9,12,15,17,22,23,28]; \\
&& \\
&& (29;13,11,11,13;19) \\
&& [0,2,8,9,12,13,14,15,16,17,20,21,27], \\
&& [1,2,3,10,12,16,19,20,22,25,27], \\
&& [0,1,2,4,5,7,8,9,13,17,18,20,23]; \\
&& \\
&& (31;13,14,14,11;21) \\
&& [0,1,2,5,8,11,12,19,20,23,26,29,30], \\
&& [0,3,8,14,15,16,17,19,21,22,25,27,29,30], \\
&& [0,1,4,6,11,15,16,20,22,24,30]; \\
&& [2,3,5,7,9,10,12,15,16,17,28,29,30], \\
&& [0,1,3,4,8,12,15,16,18,24,25,26,27,29], \\
&& [0,1,6,10,12,15,16,19,21,25,30]; \\
\end{eqnarray*}

\begin{eqnarray*}
&& (31;15,12,12,13;21) \\
&& [0,2,4,7,10,11,12,15,16,19,20,21,24,27,29], \\
&& [2,3,5,12,14,15,19,20,25,26,27,30], \\
&& [6,8,9,11,12,13,14,15,16,18,24,26,28]; \\
&& [0,1,6,9,13,14,15,16,17,18,22,25,30], \\
&& [2,4,6,9,10,13,15,20,23,25,26,29], \\
&& [0,1,2,4,5,6,9,10,11,12,22,23,24,28,30]; \\
&& \\
&& (31;15,15,15,10;24) \\
&& [0,1,2,7,10,11,14,15,16,17,20,21,24,29,30],\\
&& [0,3,5,7,10,12,13,14,15,16,18,19,23,24,30],\\
&& [0,2,4,6,12,14,22,25,26,28];\\
&& [0,4,6,8,10,11,15,16,18,20,21,23,24,26,28],\\
&& [2,4,5,6,8,11,12,13,16,20,24,25,26,27,30],\\
&& [2,11,12,13,14,17,18,19,20,29];\\
&& \\
&& (33;13,16,16,12;24) \\
&& [0,1,2,4,7,12,14,19,21,26,29,31,32], \\
&& [1,2,3,6,9,10,12,13,14,17,18,19,23,24,26,32], \\
&& [1,4,5,11,13,14,15,17,19,20,28,32]; \\
&& [0,1,4,8,10,14,16,18,19,25,27,30,32], \\
&& [0,5,7,9,10,13,14,15,16,17,18,21,22,27,28,30], \\
&& [2,3,4,6,13,16,17,20,27,29,30,31]; \\
&& \\
&& (33;15,13,13,14;22) \\
&& [0,2,3,5,9,10,13,14,19,20,23,24,28,30,31], \\
&& [0,1,2,3,4,8,14,17,20,24,26,28,29], \\
&& [0,2,3,4,5,6,7,14,19,20,22,25,27,29]; \\
&& [0,1,2,4,7,11,13,16,18,26,27,28,29,30,32], \\
&& [0,1,2,7,9,10,11,15,16,19,22,29,32], \\
&& [1,4,5,9,11,13,16,17,20,22,24,28,29,32]; \\
&& \\
\end{eqnarray*}

\begin{eqnarray*}
&& (33;15,16,16,11;25) \\
&& [0,1,4,7,8,9,10,13,20,23,24,25,26,29,32], \\
&& [0,1,4,6,8,10,11,12,15,17,18,19,22,23,24,31], \\
&& [1,6,7,8,10,11,14,16,26,29,31]; \\
&& [0,1,5,8,12,14,15,17,18,20,22,25,26,29,32], \\
&& [0,1,3,8,13,14,15,19,21,22,23,25,27,30,31,32], \\
&& [0,3,10,13,14,15,18,19,20,23,30]; \\
&& \\
&& (33;16,14,14,12;23) \\
&& [1,2,4,6,10,14,15,16,17,18,19,23,27,29,31,32], \\
&& [0,2,9,10,11,13,15,16,19,20,21,22,25,28], \\
&& [1,4,8,9,11,15,16,18,25,26,30,31]; \\
&& \\
&& (35;16,15,15,13;24) \\
&& [1,3,4,7,9,10,11,15,20,24,25,26,28,31,32,34], \\
&& [0,2,6,7,9,11,14,17,18,19,28,29,32,33,34], \\
&& [0,1,3,4,10,15,17,23,26,28,29,30,32]; \\
&& [0,6,9,11,14,16,18,21,22,25,26,27,29,32,33,34], \\
&& [0,2,4,8,11,12,13,18,23,26,29,30,31,32,33], \\
&& [0,1,9,10,13,14,16,19,21,22,25,26,34]; \\
&& \\
&& (35;17,16,16,12;26) \\
&& [0,1,2,6,9,12,14,16,17,18,19,21,23,26,29,33,34], \\
&& [1,5,12,13,14,15,17,20,21,23,25,26,27,30,31,34], \\
&& [4,10,11,15,16,22,23,24,25,27,31,34]; \\
&& \\
&& (37;15,16,16,15;25) \\
&& [0,2,3,5,10,11,12,16,21,25,26,27,32,34,35], \\
&& [0,1,2,5,9,10,12,13,15,16,22,28,30,33,34,35], \\
&& [0,2,4,6,10,11,12,13,16,19,20,22,30,31,33]; \\
&& \\
&& (37;16,15,15,16;25) \\
&& [1,4,5,6,9,12,13,14,23,24,25,28,31,32,33,36], \\
&& [0,2,3,4,6,7,13,14,16,19,24,28,30,35,36], \\
&& [1,5,7,8,11,13,14,15,19,26,28,30,32,33,35,36]; \\
&& \\
\end{eqnarray*}

\begin{eqnarray*}
&& (37;16,18,18,13;28) \\
&& [1,3,6,11,12,16,17,18,19,20,21,25,26,31,34,36], \\
&& [1,2,7,11,14,15,17,18,23,25,26,27,28,29,31,32,34,36], \\
&& [1,2,3,4,11,15,17,21,22,26,29,32,33]; \\
&& [1,2,4,6,9,11,12,14,18,19,20,21,22,23,24,36], \\
&& [2,3,5,6,9,10,11,12,16,17,19,20,21,22,27,31,33,35], \\
&& [0,2,6,7,11,14,17,20,23,26,30,31,35]; \\
&& \\
&& (37;17,17,17,13;27) \\
&& [0,2,3,5,6,8,9,15,16,21,22,28,29,31,32,34,35], \\
&& [2,4,5,7,8,9,15,16,18,19,20,23,24,25,27,29,33], \\
&& [2,3,10,12,19,20,22,24,27,29,31,34,35]; \\
&& [1,2,3,4,9,11,13,15,16,19,20,21,22,24,25,27,28], \\
&& [2,6,9,10,13,15,19,24,25,26,28,29,31,33,34,35,36], \\
&& [0,3,5,11,12,13,17,20,24,25,26,32,34]; \\
&& \\
&& (37;18,15,15,15;26) \\
&& [3,4,5,6,7,9,11,13,16,21,24,26,28,30,31,32,33,34], \\
&& [0,4,7,13,16,17,18,19,22,23,24,29,30,32,33], \\
&& [1,5,9,10,12,15,18,22,23,24,26,28,30,31,33]; \\
&& [0,2,4,6,9,11,12,13,15,16,17,18,22,24,25,30,35,36], \\
&& [1,4,9,13,16,17,20,21,22,23,24,31,32,33,36], \\
&& [0,6,8,10,11,14,16,17,20,21,23,26,27,29,31]; \\
&& \\
&& (39;17,17,17,15;27) \\
&& [0,5,6,8,10,14,15,17,18,21,22,24,25,29,31,33,34],\\
&& [0,3,5,6,12,13,14,16,17,18,22,27,30,33,35,37,38],\\
&& [1,2,3,5,6,8,12,13,14,15,26,31,32,34,38];\\
&& [2,3,7,9,10,15,16,18,19,20,21,23,28,30,32,34,35],\\
&& [0,3,5,6,10,12,13,14,16,21,22,24,25,29,30,34,36],\\
&& [0,1,2,3,4,8,14,18,21,25,31,35,36,37,38];\\
&& \\
\end{eqnarray*}

\begin{eqnarray*}
&& (39;18,16,16,16;27) \\
&& [3,4,5,7,8,10,12,17,18,21,22,27,29,31,32,34,35,36],\\
&& [0,3,4,5,7,8,9,10,15,16,24,26,28,30,33,36],\\
&& [2,3,4,6,7,8,12,13,15,16,23,24,29,31,35,38];\\
&& [1,2,3,9,10,16,17,19,20,22,24,27,29,31,32,33,35,36],\\
&& [1,5,6,7,10,11,19,22,25,27,29,30,32,33,34,36],\\
&& [1,7,10,11,12,13,17,19,20,22,26,27,28,29,32,38];\\
&& \\
&& (41;16,20,20,16;31) \\
&& [1,2,3,9,11,15,19,20,21,22,26,30,32,38,39,40],\\
&& [0,3,9,11,14,15,16,19,22,23,24,25,26,28,29,30,32,35,37,40],\\
&& [0,4,5,7,14,16,18,19,21,23,24,31,32,37,38,40];\\
&& \\
&& (41;18,19,19,15;30) \\
&& [4,5,7,8,10,11,15,16,17,24,25,26,30,31,33,34,36,37], \\
&& [3,5,8,9,11,12,16,17,18,19,21,22,23,26,28,30,33,34,36], \\
&& [0,2,3,5,6,11,15,20,22,24,26,27,34,35,39]; \\
&& [3,5,6,9,10,11,13,15,16,26,27,31,33,35,36,38,39,40],\\
&& [1,2,5,8,9,11,13,14,15,16,18,20,23,24,29,30,31,32,40],\\
&& [0,2,4,5,10,15,18,19,22,23,26,31,36,37,39];\\
&& \\
&& (43;21,21,21,15;35) \\
&& [0,1,2,3,4,8,9,12,14,19,22,23,26,28,29,31,32,34,38,39,41],\\
&& [1,4,6,9,10,11,13,14,15,16,17,21,23,24,25,31,35,36,38,40,41],\\
&& [0,7,9,13,14,15,17,18,25,26,28,29,30,34,36];\\
\end{eqnarray*}

The last example consists of a D-optimal design (blocks $A$ and $D$) and two copies of the Paley difference set in $\bZ_{43}$ 
(blocks $B=C$). It is taken from the paper \cite{DDK:SpecMatC:2015}.

\end{document}